\newtheorem{theorem}{Theorem}[section]
\long\def\alert#1{\smallskip{\hskip\parindent\vrule%
\vbox{\advance\hsize-2\parindent\hrule\smallskip\parindent.4\parindent%
\narrower\noindent#1\smallskip\hrule}\vrule\hfill}\smallskip}
\theoremstyle{definition}
\newtheorem{definition}[theorem]{Definition}
\newtheorem{example}[theorem]{Example}
\newtheorem{discussion}[theorem]{Discussion}
\newtheorem{remark}[theorem]{Remark}
\newtheorem{questions}[theorem]{Questions}
\numberwithin{equation}{section}
\newtheorem{remark/questions}[theorem]{Remark and Questions}
\newtheorem{remarks}[theorem]{Remarks}
 \long\def\alert#1{\smallskip\line{\hskip\parindent\vrule
\vbox{\advance\hsize-2\parindent\hrule\smallskip\parindent.4\parindent
  \narrower\noindent#1\smallskip\hrule}\vrule\hfill}\smallskip}
\def\frac#1#2{{{#1}\over{#2}}}
\def\endresult#1{\medskip}
\def \Q{{{\mathbb Q }}}
\def \Z{{{\mathbb Z }}}
  \def \N{{\mathbb  N}}
\def\q{{\bf q}}
\def\p{{\bf p}}
\def\m{{\bf m}}
\def\n{{\bf n}}
\def\Gff{\mathop{\rm Gff}}
\def\dim{\mathop{\rm dim}}
\def\Spec{\mathop{\rm Spec}}
\def\hgt{\mathop{\rm ht}}
\begin{document}
\baselineskip 15 pt

\title[Formal fibers ]{
 Formal fibers of prime ideals in polynomial rings}
\author{William Heinzer}
\address{Department of Mathematics, Purdue University, West
Lafayette, Indiana 47907}

\email{heinzer@math.purdue.edu}

\author{Christel Rotthaus}
\address{Department of Mathematics, Michigan State University, East
  Lansing,
   MI 48824-1027}

\email{rotthaus@math.msu.edu}

\author{Sylvia Wiegand}
\address{Department of Mathematics, University of Nebraska, Lincoln,
NE 68588-0130}

\email{swiegand@math.unl.edu}

\subjclass{ Primary 13B35, 13J10, 13A15}


\keywords{formal fibers,  generic formal fiber ring.}

   \thanks{The authors are grateful for
  the hospitality, cooperation and support of Michigan State,
  Nebraska,
   Purdue, and MSRI in Berkeley,  where we worked on 
this research.}

\thanks{ We thank the referee for helpful suggestions about this paper.}

   \begin{abstract} Let $(R,\m)$ be a Noetherian local domain of dimension 
$n$ that is essentially finitely generated over a field  and
    let $\widehat R$ be the $\m$-adic completion of $R$. 
Matsumura has shown that $n-1$ is the maximal height possible for prime 
ideals $P$ of $\widehat R$ such that $P\cap R=(0)$.
In this article we prove that $\hgt {P}=n-1$, for {\it every}  prime ideal 
$ P$ of $\widehat R$ that is maximal with respect to $P \cap R = (0)$.
We also present a  related result  concerning  generic  formal fibers
of certain extensions of mixed polynomial-power series rings.
\end{abstract}

\maketitle

\section{Introduction}\label{intro}

Let $(R,\m)$ be a Noetherian local domain and let $\widehat R$ be the 
$\m$-adic completion of $R$. The {\it generic formal fiber ring} of $R$ is
the localization $(R \setminus (0))^{-1}\widehat R$ of $\widehat R$ with respect
to the multiplicatively closed set of nonzero elements of $R$.  Let $\Gff(R)$ denote
the generic formal fiber ring of $R$.  If
$R$  
is essentially finitely generated over a field and $\dim R = n$, then  
$\dim( \Gff(R))  = n-1$ by the result of
Matsumura  \cite[Theorem~2]{M1.5} mentioned in the abstract.  
In this article  we show {\it every} maximal 
ideal of $\Gff(R)$ has height $n-1$; equivalently,  
$\hgt P = n-1$, 
for every prime ideal 
$ P$ of $\widehat R$ that is maximal with respect to $P \cap R = (0)$,  a sharpening of Matsumura's result.   

In several earlier articles we encounter formal fibers and generic fibers; these concepts are
related to  our study of prime 
spectral maps among ``mixed polynomial-power series rings" over a field; see 
 \cite{weier}, \cite{pps} and \cite{tgf}. 
For $P \in \Spec R$, the {\it formal fiber} over $P$ is  
$\Spec ((R\setminus P)^{-1}(\widehat R/P\widehat R))$, or equivalently 
$\Spec((R_P/PR_P)\otimes_R \widehat R)$.
Let $\Gff(R/P)$ denote 
the generic formal fiber 
ring  of $R/P$. 
Since $\widehat R/P\widehat R$ is 
the completion of $R/P$, the formal fiber over $P$ is   $\Spec (\Gff(R/P))$.

Let $n$ be a positive integer, let $X=\{x_1,\hdots,x_n\}$ be a set
of $n$ variables over   a field $k$, and let
$A:=k[x_1,\hdots,x_n]_{(x_1,\hdots,x_n)} = k[X]_{(X)}$ denote the
localized polynomial ring in these $n$ variables over $k$.  Then
the completion of $A$ is $\widehat{k[X]}=k[[X]]$. 

With this notation, we prove   the following theorem  in \cite{weier}:

\begin{theorem}  \label{weierkxt} $\phantom{x}$ \cite[Theorem~24.1.1]{powerbook} Let $A = k[X]_{(X)}$ be the 
localized  polynomial ring as defined above. Every maximal ideal of
the generic formal fiber ring $\Gff(A)$  has height $n-1$. Equivalently, if $ Q$ is an ideal of
 $\widehat A$ maximal with  respect to $Q\cap A=(0)$, then $Q$ is a prime
ideal of height $n-1$.
\end{theorem}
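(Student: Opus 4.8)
The plan is to combine Matsumura's upper bound with a matching lower bound obtained by an \emph{enlargement} argument. Since an ideal maximal with respect to being disjoint from the multiplicative set $A \setminus (0)$ is automatically prime, I first record that such a $Q$ satisfies $\hgt Q \le n-1$ by Matsumura's theorem, and that $\dim(\widehat A/Q) \ge 1$ because $Q \ne \widehat\m$ (as $\widehat\m \cap A = \m \ne (0)$). Since $\widehat A = k[[X]]$ is regular local, hence catenary and Cohen--Macaulay, $\hgt Q + \dim(\widehat A/Q) = n$. Thus it suffices to prove: \emph{if $Q \cap A = (0)$ and $\dim(\widehat A/Q) \ge 2$, then $Q$ is not maximal with respect to $Q \cap A = (0)$}; equivalently, there is a prime $Q' \supsetneq Q$ with $Q' \cap A = (0)$. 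Granting this, every maximal $Q$ has $\dim(\widehat A/Q) = 1$ and therefore $\hgt Q = n-1$.

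To set up the enlargement, I pass to $B := \widehat A/Q$, a complete local domain with $\dim B = d \ge 2$ and maximal ideal $\m_B$ generated by the images of $x_1, \dots, x_n$. The hypothesis $Q \cap A = (0)$ says precisely that the composite $A \hookrightarrow B$ is injective, i.e.\ the images $\bar x_1, \dots, \bar x_n$ are algebraically independent over $k$. Finding $Q'$ is then equivalent to finding a height-one prime $\mathfrak p$ of $B$ for which $\bar x_1, \dots, \bar x_n$ remain algebraically independent over $k$ in $B/\mathfrak p$; the preimage of such a $\mathfrak p$ in $\widehat A$ is the desired $Q'$. I would produce $\mathfrak p$ by first choosing a module-finite Noether normalization $P = k[[z_1, \dots, z_d]] \hookrightarrow B$ (Cohen's structure theorem, which can be set up concretely through the Weierstrass preparation technique of \cite{weier}). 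Because $P \to B$ is finite with $P$ normal and $B$ a domain, going-down holds, so height-one primes of $B$ lie over height-one primes of $P$, and it suffices to choose $\mathfrak P \subset P$ of height one whose lift $\mathfrak p$ preserves the independence of the $\bar x_j$.

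The heart of the argument is to choose $\mathfrak P$ \emph{transcendentally}. I would take $\mathfrak P = (z_1 - \gamma)$ with $\gamma \in k[[z_2, \dots, z_d]]$ a power series selected so that, after the specialization $z_1 \mapsto \gamma$, no algebraic relation over $k$ among the $\bar x_j$ is created; since $\dim(B/\mathfrak p) = d-1 \ge 1$, the field $\mathrm{Frac}(B/\mathfrak p)$ has infinite transcendence degree over $k$, so there is room for $n$ independent elements. The existence of a sufficiently generic $\gamma$ over an \emph{arbitrary} field $k$ is the crucial point: it cannot be obtained by a naive cardinality count, since for certain fields $|A|$ equals the number of height-one primes of $B$. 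Instead I would rely on explicit transcendence of gap-type power series, a combinatorial fact independent of $|k|$, together with a tracking of the finitely many polynomial relations among $\bar x_1, \dots, \bar x_n$ that must be avoided.

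The step I expect to be the main obstacle is exactly this last one: showing that a generic transcendental choice of $\gamma$ simultaneously preserves the algebraic independence of the specific elements $\bar x_1, \dots, \bar x_n$, uniformly over an arbitrary (possibly uncountable) field. This is where the interaction between the integral relations of the $x_j$ over $P$ and the specialization $z_1 \mapsto \gamma$ must be controlled, and it is the reason the theorem is more than a formal consequence of Matsumura's dimension count.
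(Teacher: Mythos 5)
Your overall skeleton is the right one, and it matches the strategy of the source: note that this paper does not actually prove Theorem~\ref{weierkxt} here at all --- it imports it from \cite{weier} (see also \cite[Theorem~24.1.1]{powerbook}) --- so the comparison is with that proof. As there, everything reduces to showing that a prime $Q \subset \widehat A$ with $Q \cap A = (0)$ and $\dim (\widehat A/Q) \ge 2$ is not maximal in the generic fiber, and your bookkeeping around this reduction is correct: an ideal maximal with respect to missing $A \setminus (0)$ is prime; $\hgt Q + \dim(\widehat A/Q) = n$ since $k[[X]]$ is regular, hence catenary and equidimensional; and Matsumura's bound is in fact not even needed, since $Q \ne \widehat\m$ already forces $\dim(\widehat A/Q) \ge 1$. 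The passage to $B = \widehat A/Q$, the Noether normalization $P = k[[z_1,\dots,z_d]] \hookrightarrow B$, and the going-down remark for lifting height-one primes are all fine.

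The genuine gap is exactly the step you flag as ``the main obstacle,'' and your proposed remedy does not close it. To keep $\bar x_1, \dots, \bar x_n$ algebraically independent in $B/\mathfrak{p}$ you must ensure that \emph{no} nonzero $f \in k[T_1, \dots, T_n]$ vanishes at the image tuple; this is not a matter of ``tracking the finitely many polynomial relations'' --- there is no finite list, but rather $\max(\lvert k \rvert, \aleph_0)$ potential relations to be avoided simultaneously. Moreover, taking $\gamma$ to be an explicitly transcendental gap series is a property of $\gamma$ over $k(z_2, \dots, z_d)$ and says nothing about what the specialization $z_1 \mapsto \gamma$ does to the \emph{other} elements $\bar x_j$ of $B$: their integral equations over $P$ specialize in ways that the transcendence of $\gamma$ alone does not control. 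Since, as you yourself observe, cardinality counting also fails when $\lvert k \rvert = \lvert k \rvert^{\aleph_0}$ (e.g.\ $k = \mathbb{R}$ or $\mathbb{C}$), neither of the two mechanisms you invoke can produce the required $\mathfrak{p}$, and this avoidance problem is precisely the hard content of the theorem. In \cite{weier} it is not resolved by a genericity argument at all: the proof there runs on the Weierstrass Preparation Theorem, using changes of variables that restrict to automorphisms of the polynomial subring $A$ (unlike a general automorphism of $\widehat A$) together with Weierstrass divisions and an induction on the number of variables, so that the enlarged prime is \emph{constructed} with the injectivity of $A \to \widehat A/Q'$ built in, rather than obtained by avoiding bad specializations. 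Without a substitute for that machinery, your outline establishes the (comparatively easy) reduction but not the theorem.
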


We were inspired to revisit and generalize  Theorem~\ref{weierkxt}
by Youngsu Kim.    His interest 
in formal fibers and the material in \cite{weier}
 inspired us to consider  the  second question below. 

\begin{questions}\label{Ysk} For $n \in \N$, let $x_1, \ldots, x_n$ be indeterminates 
over a field $k$  and let 
$R = 
k[x_1,\dots,x_n]_{(x_1, \dots,x_n)}$ denote the localized polynomial ring 
with maximal ideal   $\m = (x_1,\dots,x_n)R$. Let 
$\widehat{R}$ be the $\m$-adic completion of $R$. 
\begin{enumerate} 
\item For $P \in \Spec R$, what is the dimension of the generic formal fiber  ring  
$\Gff (R/P)$?
\item  What heights are possible for maximal ideals of the ring $\Gff(R/P)$?
\end{enumerate}
\end{questions}

In connection with Question~\ref{Ysk}.1, for $P \in \Spec R$, the ring $R/P$ is
essentially finitely generated over a field, and a result of Matumura
\cite[Corollary, p.~263]{M1.5}
states  that $\dim (\Gff (R/P)) = n-1 - \hgt P$.

As a sharpening of Matsumura's result  and of  
Theorem~\ref{weierkxt}, we prove  Theorem~\ref{esfingen1};  see 
also Theorem~\ref{esfingen2}.  Thus the answer to  
Question~\ref{Ysk}.2 is  that the height of {\it every} maximal 
ideal of $\Gff(R/P)$ is $n-1 - \hgt P$.

\begin{theorem} \label{esfingen1} Let $S$ be a local domain essentially finitely generated
over a field;  thus  $S = k[s_1, \ldots, s_r]_{\p}$, where $k$ is a field, $r \in \N$,
 the elements
$s_i$ are in $S$ and $\p$ is a prime ideal of the  finitely generated $k$-algebra $k[s_1, \ldots, s_r]$. 
Let $\n := \p S$ and let $\widehat S$ denote the $\n$-adic completion of $S$. 
Then every maximal ideal of $\Gff (S)$  has height $\dim S - 1$.
Equivalently,  if $Q \in \Spec \widehat S$ is maximal with respect to $Q \cap S = (0)$, 
then $\hgt Q = \dim S - 1$.
\end{theorem}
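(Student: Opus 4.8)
The plan is to reduce the assertion to the already-established polynomial case, Theorem~\ref{weierkxt}, by means of a Noether normalization of $S$ adapted to $\p$. Write $d:=\dim S$ and $S=T_\p$ with $T=k[s_1,\dots,s_r]$ and $\p\in\Spec T$. Choosing a Noether normalization $P=k[z_1,\dots,z_D]\subseteq T$ (where $D=\dim T$) adapted to $\p$, so that $\mathfrak{P}:=\p\cap P=(z_{e+1},\dots,z_D)$ with $e=\dim(T/\p)$ and $D-e=d$, I would set $A:=P_{\mathfrak{P}}$. A direct check shows that $A\cong k'[w_1,\dots,w_d]_{(w_1,\dots,w_d)}$ is a localized polynomial ring in $d$ variables over the field $k':=k(z_1,\dots,z_e)$, that $A\hookrightarrow S$ is a local inclusion with $K:=\operatorname{Frac}(S)$ finite over $L:=\operatorname{Frac}(A)$, and that $\widehat A\cong k'[[w_1,\dots,w_d]]$. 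Because $\p$ is minimal over $\mathfrak{P}T$, the ideal $\m_A S$ is $\n$-primary; topological Nakayama then gives that $\widehat S$ is a finite $\widehat A$-module (and $\widehat A\hookrightarrow\widehat S$ is injective, as $\dim\widehat S=d=\dim\widehat A$). Thus, by Theorem~\ref{weierkxt} applied over $k'$, every maximal ideal of $\Gff(A)$ has height $d-1$.

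Next I would convert the height statement into a statement about quotient dimensions. Since $S$ is excellent and a domain, $\widehat S$ is equidimensional, and being complete it is catenary; hence $\hgt Q+\dim(\widehat S/Q)=d$ for every $Q\in\Spec\widehat S$. The maximal ideals of $\Gff(S)$ correspond exactly to the primes $Q$ of $\widehat S$ maximal with respect to $Q\cap S=(0)$, and localization preserves their heights. Therefore it suffices to prove $\dim(\widehat S/Q)=1$ for every such $Q$.

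Fix such a $Q$ and set $q:=Q\cap\widehat A$, $\overline S:=\widehat S/Q$, $\overline A:=\widehat A/q$; note $q\cap A=Q\cap A=(0)$, and $\overline S$ is a finite extension of $\overline A$, so $\dim\overline S=\dim\overline A$. The crux is to identify the generic formal fiber of $\overline S$ over $S$. Because every element of $S$ is algebraic over $L$, the subring of $(A\setminus(0))^{-1}\overline S$ generated by the image of $S$ together with $L$ is the field $K$; consequently every nonzero element of $S$ is already a unit after inverting $A\setminus(0)$, and $(S\setminus(0))^{-1}\overline S=(A\setminus(0))^{-1}\overline S$. The latter is finite over $(A\setminus(0))^{-1}\overline A=\Gff(A)/q\Gff(A)$, so the two rings share the same dimension.

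Now $Q$ being maximal with respect to $Q\cap S=(0)$ is equivalent to $(S\setminus(0))^{-1}\overline S$ being a field, that is, to $\dim\big(\Gff(A)/q\Gff(A)\big)=0$. Here Theorem~\ref{weierkxt} enters decisively: since $\widehat A$ is a catenary domain in which every prime maximal with respect to meeting $A$ in $(0)$ has height $d-1$, a saturated chain from $q$ up to such a prime stays inside the generic formal fiber and has length $d-1-\hgt q$, whence $\dim\big(\Gff(A)/q\Gff(A)\big)=d-1-\hgt q$. Combining, $d-1-\hgt q=0$, so $\hgt q=d-1$, $\dim\overline A=\dim(\widehat A/q)=1$, and therefore $\dim(\widehat S/Q)=\dim\overline S=1$. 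The equidimensional dimension formula then yields $\hgt Q=d-1$, as required. I expect the main obstacle to be the normalization setup of the first paragraph—producing the localized polynomial subring $A$, establishing that $\m_A S$ is $\n$-primary, and deducing finiteness of $\widehat S$ over $\widehat A$—together with the appeal to excellence that makes $\widehat S$ equidimensional; once these structural facts and the collapse $(S\setminus(0))^{-1}\overline S=(A\setminus(0))^{-1}\overline S$ are in hand, the transfer of the result from $A$ to $S$ is formal.
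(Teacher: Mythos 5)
Your argument is correct, but its second half takes a genuinely different route from the paper's. The paper proves this statement as Theorem~\ref{esfingen2}: Discussion~\ref{discuss1} rewrites $S$ as the localization at a \emph{maximal} ideal of an affine domain over an extension field $F$, a system of parameters algebraically independent over $F$ then gives a localized polynomial subring $R$, and the general transfer result Theorem~\ref{tgflem} is invoked; the height transfer inside Theorem~\ref{tgflem} runs through the TGF correspondence between $\Spec \widehat S$ and $\Spec \widehat R$ together with the going-down theorem for $\widehat R \hookrightarrow \widehat S/\q$, which is where analytic normality of $R$ enters. Your first paragraph is essentially the same reduction (your adapted Noether normalization $A = P_{\mathfrak P} \cong k'[w_1,\dots,w_d]_{(w_1,\dots,w_d)}$ compresses Discussion~\ref{discuss1} and the choice of $R$ into one step), and both proofs then quote Theorem~\ref{weierkxt} over the enlarged field and use Ratliff's theorem \cite[Theorem~31.7]{M} ($S$ universally catenary implies $\widehat S$ equidimensional). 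But where the paper uses going-down to get $\hgt Q = \hgt (Q \cap \widehat R)$ directly, you instead prove $\dim(\widehat S/Q) = 1$: the collapse $(S\setminus(0))^{-1}(\widehat S/Q) = (A\setminus(0))^{-1}(\widehat S/Q)$ (every nonzero $s \in S$ satisfies an equation over $A$ with nonzero constant term, hence becomes a unit once $A\setminus(0)$ is inverted) replaces the TGF bookkeeping, and the chain computation $\dim\bigl(\Gff(A)/q\Gff(A)\bigr) = d-1-\hgt q$ --- valid because any prime between $q$ and a fiber-maximal prime $P$ automatically contracts to $(0)$ in $A$, and $\widehat A$ is regular, hence catenary --- replaces going-down; coheight $1$ is then converted to height $d-1$ by equidimensionality plus catenarity of the complete ring $\widehat S$. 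This parallels the structure of \cite[Proposition~4.10]{HRS} quoted in Remarks~\ref{gffrem}(5)(b),(c), with Theorem~\ref{weierkxt} substituting for countability; what it buys is independence from normality and going-down, plus the explicit intermediate fact $\dim(\widehat S/Q)=1$, while the paper's Theorem~\ref{tgflem} buys a reusable transfer principle, applied again for Theorem~\ref{wogffgen}. One small patch is needed: finiteness of $\widehat S$ over $\widehat A$ requires, besides $\m_A S$ being $\n$-primary, that $S/\n$ be finite algebraic over $A/\m_A = k'$ (compare item~(1) of Theorem~\ref{tgflem} and \cite[Theorem~8.4]{M}); this is automatic in your setup, since $T/\p$ is a finite module over $P/\mathfrak P$, so that $S/\n$, the fraction field of $T/\p$, is finite over $k'$, the fraction field of $P/\mathfrak P$ --- but it should be stated, as "topological Nakayama" fails without it (e.g.\ $k[x]_{(x)} \hookrightarrow k'[x]_{(x)}$ with $k'/k$ infinite algebraic).
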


In Theorem~\ref{wogffgen}  of  Section~\ref{other}, we prove 
that all 
maximal ideals in the generic formal fiber of certain extensions
of a mixed polynomial-power series ring have the same height.

\section{Background   and Preliminaries}

We begin with historical remarks concerning dimensions and heights of  maximal ideals  of generic formal fiber rings for 
Noetherian local domains: 

\begin{remarks}\label{gffrem} 
(1)  Let $(R,\m)$ be an $n$-dimensional  Noetherian local domain. 
In his paper \cite{M1.5}, Matsumura remarks that as the ring  $R$ gets closer to its
$\m$-adic completion $\widehat R$, it is natural to think that the dimension of 
the generic formal fiber ring $\Gff(R)$ gets 
smaller. Matsumura describes examples where $\dim (\Gff(R))$ has one of
the three values $n-1, n-2$ or $0$, and speculates \cite[p.261]{M1.5} as to
whether these are the only possible values for $\dim (\Gff(R))$.

(2)  Matsumura's question in item~1  is answered by Rotthaus    
in \cite{R3.5};  she  establishes the following result: 
For every positive integer $n$ and every integer $t$ between $0$ and $n - 1$, 
there exists an excellent regular local ring
$R$ such that $\dim R = n$ and such that the generic formal fiber
ring of $R$ has dimension $t$.

(3) 
For $(R,\m)$ an $n$-dimensional  universally catenary Noetherian local domain, Loepp and Rotthaus in
\cite{LR} compare the dimension of the generic formal fiber ring of $R$ with that
of the localized polynomial ring $R[x]_{(\m,x)}$.    Matsumura shows in \cite{M1.5}
that the dimension of the generic formal fiber ring $\Gff(R[x]_{(\m, x)})$ is either $n$
or $n-1$.   Loepp and Rotthaus in \cite[Theorem~2]{LR} prove that  $\dim (\Gff(R[x]_{(\m, x)})) = n$ 
implies that $\dim( \Gff R) = n-1$.  They  show by example that in general the converse is
not true, and they  give sufficient conditions for the converse to hold.

(4)  Let $(T,M)$ be a complete Noetherian local domain that contains a field of
characteristic zero.  Assume that   $T/M$ has cardinality at least 
the cardinality of the real numbers. 
 In the
articles \cite{Lo} and \cite{Lo2}, Loepp adapts techniques developed by Heitmann in \cite{H2} and proves, among other things,  
  for every prime ideal $p$ of $T$ with 
$p \ne M$,
there exists an excellent regular local ring $R$ that has completion $T$ and
has generic formal fiber ring $\Gff(R) = T_p$. By varying the height of $p$,
Loepp obtains examples where the dimension of the generic formal fiber ring
is any integer $t$ with $0 \le t < \dim T$. Loepp  shows for these
examples that there exists a unique prime
$q$ of $T$ with $q \cap R = P$ and $q = PT$,  for each nonzero prime $P$ of $R$.

(5) If $R$ is an $n$-dimensional  countable Noetherian local domain, Heinzer, Rotthaus and Sally show
in  \cite[Proposition 4.10, page 36]{HRS}  that: 
\begin{enumerate}
 \item[(a)] The generic formal fiber ring $\Gff(R)$ is a Jacobson  ring in 
the sense that each prime ideal of $\Gff(R)$ is 
an intersection of maximal ideals of $\Gff(R)$.
 \item[(b)] 
$\dim (\widehat R/P) = 1$ for
each prime
ideal $P \in \Spec \widehat R$ that is maximal with respect to 
$P \cap R = (0)$. 
\item[(c)]   If $\widehat R$ is equidimensional, then $\hgt P = n-1$ for 
each prime
ideal $P \in \Spec \widehat R$ that is maximal with respect to 
$P \cap R = (0)$. 
\item[(d)]  If $Q \in \Spec \widehat R$ with $\hgt Q \ge 1$, then there exists 
a prime ideal $P \subset Q$ such that $P \cap R = (0)$ and $\hgt (Q/P) = 1$. 
\end{enumerate} 
 It
follows from this result  that all ideals maximal 
in the generic formal fiber ring of the ring $A$ of Theorem~\ref{weierkxt} have the same height, if the field $k$ is countable.

(6) In Matsumura's article  \cite{M1.5} referenced in item 1 above, 
he does not address the question of whether
all ideals maximal in the generic formal fiber rings have the same height for the rings he studies. 
In general,  for an excellent regular local ring R, it can happen that 
\index{RLR}
 $\Gff(R)$  contains maximal ideals of different heights; see  the article \cite[Corollary 3.2]{R3.5} of Rotthaus.
\index{Rotthaus}

(7) Charters and Loepp in \cite[Theorem~3.1]{CL} extend Rotthaus's result of item~6:  Let 
$(T,M)$ be a complete Noetherian local ring and let $G$
be a nonempty subset of $\Spec T$ such that the number of maximal 
elements of $G$ is finite. 
They
  prove  there exists a 
Noetherian local domain $R$ whose completion is $T$  and whose
generic formal fiber is exactly $G$ if  $G$ satisfies the
following conditions: 
\begin{enumerate}
\item[(a)] $M \notin G$ and $G$ contains the associated primes of $T$,
\item[(b)]  If $P \subset Q$ are in $\Spec T$  and $Q \in G$, then $P \in G$, and
\item[(c)]  Every $Q \in G$ meets the prime subring of $T$ in $(0)$.
\end{enumerate} 
If $T$ contains the ring of integers and, in addition to
items~a, b, and c, one also has
\begin{enumerate}
\item[(d)] $T$ is equidimensional, and
\item[(e)] $T_P$ is a regular local ring for each maximal element $P$ of $G$,
\end{enumerate}
then Charters and Loepp  prove there
exists an excellent local domain $R$ whose completion is $T$ and whose
generic formal fiber is exactly $G$; see  \cite[Theorem~4.1]{CL}.
Since the maximal elements of the set $G$ may be chosen to have  different heights, this result
provides many examples where the generic formal fiber ring
contains maximal ideals of different heights.
\end{remarks}

We make  the following observations concerning injective local maps of Noetherian local rings:

\begin{discussion} \label{subspace1}
Let $\phi : (R,\m) \hookrightarrow (S,\n)$  be an injective local map 
of the Noetherian local ring
$(R,\m)$ into a Noetherian local ring $(S,\n)$.  Let
$\widehat R = \underset{n}{\varprojlim}\phantom{x} R/\m^n$ denote the $\m$-adic completion of $R$
and let $\widehat S = \underset{n}{\varprojlim}\phantom{x} S/\n^n$ denote the $\n$-adic completion
of $S$.
 For each $n \in \N$, we have $\m^n \subseteq \n^n \cap R$. Hence  there
exists a map  
$$
\phi_n : R/\m^n ~\to  ~R/(\n^n \cap R) ~ \hookrightarrow  ~S/\n^n, \quad  \text{ for each } ~n \in  \N.
$$
The family of maps $\{ \phi_n \}_{n \in \N}$ determines  a  unique map 
$\widehat \phi : \widehat R \to \widehat S$.

Since 
$\m^n \subseteq \n^n \cap R$,  
the $\m$-adic topology on $R$ is the  subspace topology   from $S$    
if and only if for each positive integer $n$
there exists a positive integer $s_n$ such that $\n^{s_n} \cap R \subseteq \m^n$.
Since $R/\m^n$ is Artinian, the descending chain of ideals $\{\m^n + (\n^s \cap R) \}_{s \in \N}$
stabilizes.  The ideal $\m^n$ is closed in the $\m$-adic topology, and it is  
closed in the subspace topology if and only if
$\bigcap_{s \in \N} (\m^n + (\n^s \cap R)) = \m^n$.
Hence  $\m^n$ is closed in the subspace topology if and only if there exists a positive
integer $s_n$ such that $\n^{s_n} \cap R \subseteq \m^n$. 
Thus the subspace topology  from $S$ is the same as the  $\m$-adic topology on $R$
if and only if $\widehat \varphi$ is injective.
\end{discussion}

\section{$\Gff(R)$ and $\Gff(S)$ for $S$ an extension domain of $R$\label{main}}

We use the following definition in our main result.

\begin{definition} \label{TGFdef}   Let  $R$ and $S$ 
be integral domains with
$R$ a subring of $S$ and 
let   $R\overset{\varphi}\hookrightarrow S$ denote the inclusion map of $R$ into $S$. 
The integral domain  $S$ is a {\it trivial generic
fiber} 
extension of $R$, or a {\it TGF} extension of $R$,  if
every nonzero prime ideal of $S$ has nonzero intersection with
$R$. In this case, we also say that $\varphi$ is 
  a {\it trivial generic fiber extension} or
{\it TGF extension}.\end{definition}

Theorem~\ref{tgflem} is useful in considering properties of generic formal fiber rings.

\begin{theorem} \label{tgflem}
Let $\phi : (R,\m) \hookrightarrow (S,\n)$ be an injective local map of 
Noetherian local integral domains. Consider the following properties$~\!\!:$ 
\begin{enumerate}
\item[$(1)$]  $\m S$ is $\n$-primary,  and $S/\n$ is finite algebraic over $R/\m$. 
\item[$(2)$]   $R \hookrightarrow S$ is a TGF-extension and $\dim R = \dim S$.  
\item[$(3)$]  $R$ is analytically irreducible.
\item[$(4)$]    $R$ is analytically normal and   $S$ is universally catenary.  
\item[$(5)$]  All maximal ideals of $\Gff (R)$ have the same height.
\end{enumerate}
If items 1, 2 and 3 hold, then $\dim  (\Gff (R)) = \dim (\Gff (S))$.    
If, in addition, items 4 and 5 hold,  
then the height $h$ of every maximal ideal of $\Gff (S)$ is  $h = \dim ( \Gff(R))$. 
\end{theorem}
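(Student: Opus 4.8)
The plan is to push everything to the completed map $\widehat\phi\colon\widehat R\to\widehat S$, show it is a finite injective (hence integral) extension of complete local rings with $\widehat R$ a domain, and then read the dimensions and heights of the two generic formal fibers directly off this extension. First I would record that $\widehat S$ is a finite $\widehat R$-module: by (1) the ideal $\m S$ is $\n$-primary, so the $\m$-adic and $\n$-adic topologies on $S$ agree and $S/\m S$ is an Artinian local ring finite over $R/\m$; the standard completion criterion then gives that $\widehat S$ is module-finite over $\widehat R$, so $\widehat\phi$ is integral. For injectivity, set $K=\ker\widehat\phi$. Then $\widehat R/K\hookrightarrow\widehat S$ is finite and injective, whence $\dim(\widehat R/K)=\dim\widehat S=\dim S=\dim R=\dim\widehat R$, the middle equality being the hypothesis $\dim R=\dim S$ in (2). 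By (3), $\widehat R$ is a complete local domain, hence equidimensional and catenary, so every nonzero ideal strictly lowers the dimension; since $\dim(\widehat R/K)=\dim\widehat R$, this forces $K=(0)$.

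The heart of the argument is a TGF observation: for $Q\in\Spec\widehat S$ one has $Q\cap R=(0)$ if and only if $Q\cap S=(0)$. The forward implication is immediate, and conversely $Q\cap S$ is a prime of $S$ contracting to $Q\cap R=(0)$ in $R$, so the TGF hypothesis in (2) forces $Q\cap S=(0)$. Writing $W=R\setminus(0)\subseteq V=S\setminus(0)$, this says $W$ and $V$ have the same saturation in $\widehat S$, so $\Gff(S)=V^{-1}\widehat S=W^{-1}\widehat S$. Since $\Gff(R)=W^{-1}\widehat R\hookrightarrow W^{-1}\widehat S=\Gff(S)$ is a localization of the integral injective extension $\widehat R\hookrightarrow\widehat S$, it too is integral and injective, and therefore $\dim\Gff(S)=\dim\Gff(R)$. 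This is the first conclusion.

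For the height statement I would add (4) and (5). Let $\mathfrak Q$ be a maximal ideal of $\Gff(S)$, corresponding to $Q\in\Spec\widehat S$ maximal with respect to $Q\cap S=(0)$, and set $P=Q\cap\widehat R$, corresponding to $\mathfrak P\in\Spec\Gff(R)$. Every prime contained in $Q$ already meets $S$ in $(0)$, and likewise for $P$ and $R$, so $\hgt_{\Gff(S)}\mathfrak Q=\hgt_{\widehat S}Q$ and $\hgt_{\Gff(R)}\mathfrak P=\hgt_{\widehat R}P$. Now $\widehat R$ is a complete local (normal, by the analytic normality in (4)) domain, hence equidimensional and catenary, and by Ratliff's theorem applied to the universally catenary local domain $S$ the ring $\widehat S$ is equidimensional and catenary as well. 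The dimension formula gives $\hgt Q=\dim\widehat S-\dim(\widehat S/Q)$ and $\hgt P=\dim\widehat R-\dim(\widehat R/P)$; since $\dim\widehat S=\dim\widehat R$ and $\widehat R/P\hookrightarrow\widehat S/Q$ is finite injective, so that $\dim(\widehat S/Q)=\dim(\widehat R/P)$, I get $\hgt Q=\hgt P$. A going-up argument shows $\mathfrak P$ is maximal in $\Gff(R)$: if some $P'\supsetneq P$ had $P'\cap R=(0)$, going-up would produce $Q'\supsetneq Q$ over $P'$ with $Q'\cap R=(0)$, hence $Q'\cap S=(0)$, contradicting maximality of $Q$. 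Then (5) gives $\hgt\mathfrak P=\dim\Gff(R)$, so $\hgt\mathfrak Q=\hgt\mathfrak P=\dim\Gff(R)$.

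I expect the main obstacle to be the two places where $\widehat S$ need not be a domain: proving $\widehat\phi$ injective and proving heights are preserved along $\widehat R\hookrightarrow\widehat S$. The dimension count via $\dim R=\dim S$ settles the first, and Ratliff's equidimensionality of $\widehat S$ (rather than a going-down argument, which would require $\widehat S$ to be a domain) settles the second; the cleanest single point is the TGF identity $\Gff(S)=W^{-1}\widehat S$, which collapses the comparison of the two fibers into one integral extension.
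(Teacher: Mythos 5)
Your proof is correct, and it diverges from the paper's in an interesting way in the second half. The first half is essentially the paper's argument: the paper gets module-finiteness of $\widehat S$ over $\widehat R$ directly from item~1 by citing \cite[Theorem~8.4]{M} (you re-derive this criterion), gets injectivity of $\widehat \phi$ from the same dimension count $\dim \widehat R = \dim R = \dim S = \dim \widehat S$ against the domain $\widehat R$, and uses the TGF equivalence $Q \cap S = (0) \iff (Q \cap \widehat R) \cap R = (0)$ to produce a finite injective map $\Gff(R) \hookrightarrow \Gff(S)$; your saturation identity $W^{-1}\widehat S = V^{-1}\widehat S$ is a clean repackaging of the paper's diagram chase, not a different idea. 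For the height statement, however, the paper argues via going-down: it uses Ratliff's theorem (cited as \cite[Theorem~31.7]{M}) to get $\dim(\widehat S/\q) = \dim S$ for each minimal prime $\q$ of $\widehat S$, deduces $\q \cap \widehat R = (0)$, and then invokes the going-down theorem \cite[Theorem~9.4(ii)]{M} for $\widehat R \hookrightarrow \widehat S/\q$ --- which is precisely where the analytic normality in item~4 is used --- to conclude $\hgt Q = \hgt P$. You instead obtain $\hgt Q = \hgt P$ from the dimension formula $\hgt Q = \dim \widehat S - \dim(\widehat S/Q)$, valid since $\widehat S$ is equidimensional (same Ratliff citation) and catenary (being complete), combined with the analogous formula in the catenary complete local domain $\widehat R$ and the equality $\dim(\widehat S/Q) = \dim(\widehat R/P)$ coming from finiteness. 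What your route buys: normality of $\widehat R$ is never actually used (your parenthetical appeal to item~4 is idle), so your argument proves the theorem with item~4 weakened to ``$S$ is universally catenary,'' analytic irreducibility of $R$ sufficing throughout; the paper's going-down route needs normality but gets the height comparison prime-by-prime without discussing the dimension formula in $\widehat S$. You are also more explicit than the paper at one genuinely implicit step: that maximality of $Q$ with respect to $Q \cap S = (0)$ forces maximality of $P$ with respect to $P \cap R = (0)$, which you settle by going-up for the integral extension $\widehat R \hookrightarrow \widehat S$ together with the TGF equivalence; the paper's final sentence quietly assumes this transfer.
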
 

\begin{proof}
Let $\widehat R$ and $\widehat S$ denote the $\m$-adic completion of $R$ and $\n$-adic
completion of~$S$, respectively, and let $\widehat \phi: \widehat R \to \widehat S$ 
be the natural extension of $\varphi$ as given in Discussion~\ref{subspace1}. Consider the commutative diagram 
\begin{equation*} 
\CD
\widehat R @>{\widehat \phi}>> \widehat S~\phantom{,}\\
@AAA @AAA ~\phantom{,} \\
R @>{\phi}>> S~,
\endCD\tag{\ref{tgflem}.a}
\end{equation*}
where the vertical maps are the natural inclusion maps to the completion. 
Assume items~1, 2 and 3 hold. 
Item~1 implies that   $\widehat S$ is a finite $\widehat R$-module
with respect to the map $\widehat \phi$ by   \cite[Theorem~8.4]{M}.   
By item~2, we  have $\dim \widehat R = \dim R = \dim S 
= \dim \widehat S$.     Item~3  says that $\widehat R$ is an integral domain.
  It follows that the map $\widehat \phi :\widehat R \hookrightarrow \widehat S$ is injective.
Let $Q \in \Spec \widehat S$ and let $P = Q \cap \widehat R$. Since $R \hookrightarrow S$ 
is a TGF-extension, by item~2,  commutativity of Diagram~\ref{tgflem}.a implies that 
$$
Q ~ \cap ~S~ =~ (0)~ \iff ~ P~ \cap ~ R ~ = ~ (0).
$$ 
Therefore $\widehat \phi$ induces an injective
finite map $\Gff(R) \hookrightarrow  \Gff(S)$. We  conclude that  $\dim (\Gff(R)) = \dim (\Gff(S))$.

Assume in addition  that items~4 and 5 hold, and let   $h = \dim (\Gff(R))$.  
The assumption that $S$ is universally catenary implies 
that  $\dim (\widehat S/\q)   = \dim S$  for each 
minimal prime $\q$ of $\widehat S$ by \cite[Theorem~31.7]{M}.   
Since $\frac{\widehat R}{\q \cap \widehat R} \hookrightarrow \frac{\widehat S}{\q}$  
is an integral extension, we have $\q \cap \widehat R = (0)$.  
The assumption that $\widehat R$ is a normal domain implies
that   the going-down  theorem holds for 
$\widehat R \hookrightarrow \widehat S/\q$  by \cite[Theorem~9.4(ii)]{M}.  
Therefore for each $Q \in \Spec \widehat S$ we have $\hgt Q = \hgt P$, 
where $P = Q \cap \widehat R$.   Hence if
$\hgt P = h$ for each $P \in \Spec \widehat R$ that is maximal with 
respect to $P \cap R = (0)$, then $\hgt Q = h$ for each $Q \in \Spec \widehat S$
that is maximal with respect to $Q \cap S = (0)$.   
This
completes the proof of Theorem~\ref{tgflem}.
\end{proof}

\begin{remark}\label{essfgnfg} We would like to thank Rodney Sharp and Roger Wiegand 
for their interest in Theorem~\ref{tgflem}. 
The hypotheses  of Theorem~\ref{tgflem} do not necessarily imply 
that $S$ is a finite $R$-module,
or even  that $S$ is essentially finitely generated over $R$.   
If  $\phi : (R,\m) \hookrightarrow (T,\n)$
 is an extension of rank one discrete valuation rings (DVR's)  such that  $T/\n$ is finite algebraic over $R/\m$,   then  for every field $F$  that
 contains $R$  and is contained in 
 the field of fractions of $\widehat T$,  the ring $S := \widehat T \cap F$ is a DVR such that   
the extension $R\hookrightarrow S$ 
satisfies the hypotheses of  Theorem~\ref{tgflem}.   

As a specific example where $S$ is essentially finite over $R$, but not a finite $R$-module,  
 let $R=\Z_{5\Z}$, the integers localized at the prime ideal generated by $5$, and let $A$ be the integral closure of $\Z_{5\Z}$ in $\Q[i]$.
Then $A$ has two maximal ideals lying over $5R$, namely $(1+2i)A$ and $(1-2i)A$. Let $S=A_{(1+2i)A}$. 
Then the extension $R\hookrightarrow S$ 
satisfies the hypotheses of  Theorem~\ref{tgflem}. Since $S$ properly contains $A$, and every 
element in the field of fractions of $A$ that is integral over $R$ is contained in $A$,
it follows that $S$ is not finitely generated as an $R$-module.   In Remark~\ref{essfgnfg2},  we describe examples in higher dimension where $S$ is not a
finite $R$-module.
\end{remark}

\begin{discussion} \label{discuss1}
As in the statement of Theorem~\ref{esfingen1},  let $S = k[z_1, \ldots, z_r]_{\p}$   be  a local domain essentially 
finitely generated over a field $k$.   We observe that  $S$ is a localization at a maximal ideal of an 
integral domain that is  a finitely generated 
algebra over an extension  field $F$ of $k$. 

To see this, let $A = k[x_1, \ldots, x_r]$ be a polynomial ring in $r$ variables  over $k$, and let $Q$ denote the
kernel of the $k$-algebra homomorphism of  $A$ onto $k[z_1, \ldots, z_r]$  defined by mapping $x_i \mapsto z_i$
for each $i$ with $1 \le i \le r$.  
Using permutability of localization and residue class formation,   there exists 
a prime ideal $N \supset Q$ of $A$
such that  $S = A_N/QA_N$.
 A version of 
Noether normalization  as in \cite[Theorem 24  (14.F) page 89]{M1} states that, if $\hgt N = s$, 
then 
there exist 
elements $y_1, \ldots, y_r$ in $A$ such that $A$ is integral over $B = k[y_1, \ldots, y_r]$
and $N \cap  B = (y_1, \ldots, y_s)B$. It follows that $y_1, \ldots, y_r$ are algebraically
independent over $k$ and $A$  is a finitely generated $B$-module. Let 
$F$ denote the 
field $k(y_{s+1}, \ldots, y_r)$, 
and let $U$ denote the multiplicatively closed set $k[y_{s+1}, \ldots, y_r] \setminus (0)$.  
Then $U^{-1}B $ is the polynomial ring $F[y_1, \ldots, y_s]$, and  $U^{-1}A  := C$ 
is a finitely generated $U^{-1}B$-module. Moreover     $NC$ is  a prime  ideal of $C$ such 
that 
$$NC \cap U^{-1}B=(y_1,\ldots,y_s)U^{-1}B=(y_1,\ldots,y_s)F[y_1,\ldots,y_s]$$
 is a  maximal ideal   of $U^{-1}B$, and $(y_1,\ldots,y_s)C$ is primary for 
the maximal ideal of~$C$. Hence $NC$ is a maximal ideal of $C$
and $S = C_{NC}/QC_{NC}$ is a localization of the finitely generated $F$-algebra 
$D  := C/QC$ at the maximal ideal $NC/QC$. 

Therefore  $S$  is  
 a localization of an integral domain $D$ at a maximal ideal of $D$ and 
  $D$  is 
a  finitely generated algebra over an extension  field $F$ of $k$. 
\end{discussion}

\begin{theorem}     \label{esfingen2}    Let  $S$   be  a local integral domain of dimension $d$ that is essentially 
finitely generated over a field. Then every maximal ideal of the generic formal fiber ring $\Gff(S)$ has height $d-1$.
\end{theorem}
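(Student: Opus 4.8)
The plan is to realize $S$ as a trivial generic fiber extension of a localized polynomial ring over a field and then read off the conclusion from Theorem~\ref{tgflem} and Theorem~\ref{weierkxt}.

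First I would invoke Discussion~\ref{discuss1} to write $S = D_\n$, where $D$ is a finitely generated integral domain over an extension field $F$ of the ground field and $\n$ is a maximal ideal of $D$. Since a finitely generated domain over a field is equidimensional and catenary, every maximal ideal of $D$ has height $\dim D$; hence $\dim D = \hgt \n = \dim S = d$. Next I would choose a \emph{pointed} Noether normalization of $D$ at $\n$. Ordinary Noether normalization gives a polynomial subring $F[v_1,\dots,v_d]\subseteq D$ over which $D$ is a finite module, and $\n \cap F[v_1,\dots,v_d]$ is then a maximal ideal of height $d$. Applying the version of Noether normalization in \cite[Theorem~24~(14.F)]{M1} to this maximal ideal of $F[v_1,\dots,v_d]$, I can reposition it at the origin: there exist $w_1,\dots,w_d$ with $D$ finite over $R_0:=F[w_1,\dots,w_d]$ and $\n\cap R_0=(w_1,\dots,w_d)=:\p_0$. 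Setting $R:=(R_0)_{\p_0}=F[w_1,\dots,w_d]_{(w_1,\dots,w_d)}$, the inclusion $R_0\hookrightarrow D$ localizes to an injective local map $R\hookrightarrow S$, and $R$ is precisely the localized polynomial ring of Theorem~\ref{weierkxt}, now with ground field $F$ in $d$ variables.

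It then remains to verify the five hypotheses of Theorem~\ref{tgflem} for $R\hookrightarrow S$. Item~1 and the equality $\dim R=d=\dim S$ of item~2 follow because $D$ is finite over $R_0$ with $\n\cap R_0=\p_0$: the primes of $D$ over $\p_0$ are maximal and finite in number, so $\p_0 S$ is primary for the maximal ideal $\n S$ of $S$, and the residue field extension $S/\n S$ over $R/\p_0 R$ is finite. For the TGF condition of item~2 I would compute $(R\setminus(0))^{-1}S$: inverting $R_0\setminus(0)$ turns the module-finite domain $D$ into a domain that is finite-dimensional over $F(w_1,\dots,w_d)$, hence a field, namely the fraction field of $D$; localizing this field further at $\n$ changes nothing, so $(R\setminus(0))^{-1}S$ is a field and $S$ has no nonzero prime contracting to $(0)$ in $R$. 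Items~3 and the first half of~4 hold because $R$ is a regular local ring, whose completion is again regular and therefore a normal domain; the second half of~4 holds because $S$, a localization of a finitely generated algebra over a field, is universally catenary. Finally, item~5 is exactly Theorem~\ref{weierkxt}, which also gives $\dim(\Gff(R))=d-1$. Theorem~\ref{tgflem} then delivers that the height of every maximal ideal of $\Gff(S)$ equals $\dim(\Gff(R))=d-1$.

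The hard part will be arranging the Noether normalization so that $R$ is literally the origin-localized polynomial ring demanded by Theorem~\ref{weierkxt}; once the contraction $\n\cap R_0$ has been placed at $(w_1,\dots,w_d)$, the remaining checks are routine, with the computation of the trivial generic fiber being the only other step that needs a little care.
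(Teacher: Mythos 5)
Your proof is correct and takes essentially the same route as the paper: both pass through Discussion~\ref{discuss1} to write $S$ as a localization of an affine domain $D$ over an extension field $F$, build an injective local inclusion of a localized polynomial ring $R = F[x_1,\dots,x_d]_{(x_1,\dots,x_d)}$ into $S$, verify hypotheses 1--5 of Theorem~\ref{tgflem}, and finish with Theorem~\ref{weierkxt}. The only difference is in execution: the paper simply chooses $x_1,\dots,x_d \in \n$ algebraically independent over $F$ with $(x_1,\dots,x_d)S$ being $\n$-primary, deducing the TGF condition from the fact that $S$ is algebraic over $R$ by a transcendence-degree count, whereas your pointed Noether normalization makes $D$ module-finite over $F[w_1,\dots,w_d]$ --- a slightly heavier setup that buys you items 1 and 2 (including triviality of the generic fiber, via your computation that $(R\setminus(0))^{-1}S$ is the fraction field of $D$) by direct calculation rather than citation.
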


\begin{proof}   Using Discussion~\ref{discuss1}, we write $S=D_N$, where $N$ is a maximal ideal of a finitely generated algebra $D$ over a field $F$.
Let $\n=NS$ be the maximal ideal of $S$.
Choose $x_1, \ldots, x_d$ in $\n$ such that $x_1, \ldots, x_d$ are algebraically independent over $F$ and 
$(x_1, \ldots, x_d)S$ is $\n$-primary. Set $R = F[x_1, \ldots, x_d]_{(x_1, \ldots, x_d)}$, a localized polynomial ring over $F$,
and let $\m = (x_1, \ldots, x_d)R$. 

 To prove Theorem~\ref{esfingen2},  it suffices to show that the 
inclusion map $\phi: R \hookrightarrow S$ satisfies items 1 - 5 of Theorem~\ref{tgflem}.  By 
construction $\phi$ is an injective local homomorphism and $\m S$ is $\n$-primary.  Also $R/\m = F$ and $S/\n = D/N$ is a
field that is a finitely generated $F$-algebra and 
hence a finite algebraic extension field 
of $F$; see \cite[Theorem~5.2]{M}. Therefore item~1 holds.
Since $\dim S = d = \dim D$, 
the field of fractions of  $S$ has transcendence degree $d$
over the field $F$.  Therefore $S$ is
algebraic over $R$.  It follows that $R \hookrightarrow S$ is a TGF extension.  Thus item~2 holds.  
Since $R$ is a regular local ring,  $R$ is analytically irreducible and analytically normal.  
Since $S$ is essentially 
finitely generated over a field, $S$ is universally catenary.  Therefore items~3 and 4 hold. Since $R$ is a localized 
polynomial ring in $d$ variables,  Theorem~\ref{weierkxt} implies that every maximal ideal of $\Gff(R)$ has height $d-1$.
By  Theorem~\ref{tgflem}, every maximal ideal of $\Gff(S)$ has height $d-1$.  
\end{proof}

\section{Other results on generic formal fibers} \label{other}
 The main theorem of \cite{weier} includes results about 
the generic formal fiber ring of mixed polynomial-power series rings 
as in  Theorem~\ref{weieroth}.

\begin{theorem}  \label{weieroth}$\phantom{x}$  \cite[Theorem~24.1]{powerbook} Let   $m$ and $n$ be  positive integers, let $k$ be a field, and  let 
$X = \{x_1, \ldots, x_n\}$ and  $Y = \{y_1, \ldots, y_m\}$ be  sets of 
independent variables over $k$.
Then, for $R$ either the ring $ k[[X]]\,[Y]_{(X, Y)}$  or  
the ring $k[Y]_{(Y)}[[X]]$,   the dimension of the generic formal 
fiber ring $\Gff(R)$ is $n+m - 2$ and 
every prime
 ideal   $P$ maximal  in  $\Gff(R)$ has $\hgt P = n+m-2$. 
\end{theorem}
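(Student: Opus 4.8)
The plan is to carry out the whole argument inside the common completion $T:=\widehat R = k[[X,Y]]$, which for \emph{both} presentations is the formal power series ring in the $n+m$ variables $X\cup Y$; thus $T$ is a regular (hence normal, catenary, equidimensional) local domain of dimension $n+m$, with $\hgt P + \dim(T/P)=n+m$ for every $P\in\Spec T$. A maximal ideal of $\Gff(R)$ is a prime $P$ of $T$ maximal with respect to $P\cap R=(0)$, so the theorem amounts to two assertions: (i) $\dim\Gff(R)=n+m-2$, i.e.\ no $P$ with $P\cap R=(0)$ has $\dim(T/P)\le 1$ and some such $P$ has $\dim(T/P)=2$; and (ii) \emph{every} $P$ maximal with respect to $P\cap R=(0)$ has $\dim(T/P)=2$. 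The natural first move is to compare $R$ with the localized polynomial ring $A:=k[X,Y]_{(X,Y)}$. One checks $A\subseteq R\subseteq T$ in either presentation (in $k[Y]_{(Y)}[[X]]$ a polynomial with unit constant term becomes invertible after expanding $1/q$ as a power series in $X$ over $k[Y]_{(Y)}$), and $\widehat A=T$. Since $A\subseteq R$, localization gives $\Gff(R)=(R\setminus(0))^{-1}\Gff(A)$, and Theorem~\ref{weierkxt}, applied with $n+m$ variables, shows $\Gff(A)$ is catenary and equidimensional of dimension $n+m-1$. So $\Gff(R)$ is a localization of a known equidimensional ring, and the task is to understand which primes of $\Gff(A)$ survive the further localization at $R\setminus(0)$.

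For the dimension statement (i) I would use the variable-adjunction recursion of Remark~\ref{gffrem}(3). Writing $R=\bigl(k[[X]]\bigr)[y_1,\dots,y_m]_{(X,Y)}$, build $R$ by adjoining the polynomial variables $y_1,\dots,y_m$ to $V:=k[[X]]$ one at a time. Here $\Gff(V)$ is a field, so $\dim\Gff(V)=0$. By Matsumura the generic formal fiber dimension of $R'[x]_{(\m',x)}$ is either $\dim R'$ or $\dim R'-1$, and by \cite[Theorem~2]{LR} it equals $\dim R'$ \emph{only if} $\dim\Gff(R')=\dim R'-1$; since at the first adjunction $\dim\Gff(V)=0$ is not full (when $n\ge 2$ this is immediate as $0\ne n-1$, and when $n=1$ one checks directly by one–variable Weierstrass preparation that $k[[x]][y_1]_{(x,y_1)}\hookrightarrow k[[x,y_1]]$ is a TGF extension, so $\dim\Gff=0$), the smaller value is forced, leaving defect $2$. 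The defect then stays equal to $2$ at every later adjunction, each of which raises both $\dim$ and the fiber dimension by $1$; hence $\dim\Gff(R)=\dim R-2=n+m-2$. The second presentation $k[Y]_{(Y)}[[X]]$ is treated symmetrically, starting from $\Gff(k[Y]_{(Y)})$ (full of dimension $m-1$ by Theorem~\ref{weierkxt}) and adjoining the power–series variables $X$, the first of which raises $\dim$ but not the fiber dimension, again producing defect $2$. The essential phenomenon in both cases is that completeness in the power–series block forces one \emph{extra} drop beyond the localized–polynomial value $n+m-1$, and Weierstrass preparation (turning a would-be power series in $P$ into a Weierstrass polynomial lying in $R$) is what produces the needed nonzero element of $P\cap R$ in the base cases.

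The part I expect to be the main obstacle is the equidimensionality statement (ii): ruling out a prime $P$ that is maximal with respect to $P\cap R=(0)$ yet has $\dim(T/P)\ge 3$. This does not follow formally from (i), because localizing the catenary equidimensional ring $\Gff(A)$ at $R\setminus(0)$ can in principle leave maximal primes of several heights unless the primes that get killed form a ``uniform top slice.'' The approach I would take is to show such a $P$ cannot be maximal by producing a strictly larger prime $P'\supsetneq P$ still disjoint from $R\setminus(0)$: concretely, $R$ injects into the complete local domain $T/P$ of dimension $\ge 3$, and one must show that this \emph{relative} generic formal fiber is nonzero, then pull back. The tools available are the going-down property (valid since $R\hookrightarrow T$ is flat and $T$ is normal, as in the proof of Theorem~\ref{tgflem}) together with the height bound from (i) reapplied ``one level down''; the delicate point is to make this enlargement uniform, so that \emph{every} maximal chain in the generic formal fiber terminates at height exactly $n+m-2$. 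This uniformity is precisely what the specialized Weierstrass/approximation machinery of \cite{powerbook} is designed to deliver, and it is where I would expect the real work of the proof to lie.
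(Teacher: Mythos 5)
You should know at the outset that this paper contains no proof of Theorem~\ref{weieroth} to compare against: the theorem is imported verbatim from \cite[Theorem~24.1]{powerbook} (proved in \cite{weier}), so your attempt has to stand entirely on its own --- and it has two genuine gaps. First, your dimension recursion only covers the presentation $k[[X]]\,[Y]_{(X,Y)}$. The Matsumura dichotomy and \cite[Theorem~2]{LR} (Remark~\ref{gffrem}(3)) concern adjunction of a \emph{polynomial} variable, $R' \mapsto R'[x]_{(\m',x)}$; no cited result, and no obvious analogue, governs power-series adjunction $R' \mapsto R'[[x]]$, whose completion is $\widehat{R'}[[x]]$ and which is not even essentially finitely generated over $R'$ --- understanding how $\Gff(R'[[x]])$ relates to $\Gff(R')$ is precisely the hard point, not something that follows ``symmetrically.'' So for $k[Y]_{(Y)}[[X]]$ you have asserted, rather than proved, that the defect is $2$. (Your base cases, by contrast, are sound: $\Gff(k[[X]])$ is a field, and the $n=1$ TGF claim for $k[[x]][y]_{(x,y)} \hookrightarrow k[[x,y]]$ does follow from Weierstrass preparation, since a prime power series $f \notin (x)$ is $y$-regular and hence associate to a Weierstrass polynomial lying in $k[[x]][y]$.)

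Second, and more seriously, the equidimensionality statement (ii) --- that \emph{every} $P \in \Spec k[[X,Y]]$ maximal with respect to $P \cap R = (0)$ has $\hgt P = n+m-2$ --- is the actual content of the theorem: it is exactly what feeds condition~5 of Theorem~\ref{tgflem} in the proof of Theorem~\ref{wogffgen}, and it does not follow from the dimension statement, as Remark~\ref{gffrem}(6) warns (even excellent regular local rings can have generic formal fibers with maximal ideals of different heights). You identify the missing step correctly --- given $P$ with $P \cap R = (0)$ and $\dim(k[[X,Y]]/P) \ge 3$, produce $P' \supsetneq P$ with $P' \cap R = (0)$ --- but then defer it to ``the specialized Weierstrass/approximation machinery of \cite{powerbook},'' which is an acknowledgment of the gap rather than a proof. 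Nothing in your framework can close it: the identity $\Gff(R) = (R \setminus (0))^{-1}\Gff(A)$ gives no control over the heights of the maximal primes that survive the localization, and going-down only transfers heights along a map already understood; it provides no mechanism for enlarging $P$ inside the fiber. In \cite{weier} this enlargement is carried out by a direct Weierstrass-preparation construction of elements of $P' \setminus P$ avoiding $R \setminus (0)$, and that construction is the theorem. As it stands, your proposal establishes at most the dimension equality $\dim \Gff(R) = n+m-2$ for the first presentation, together with a correct framing of --- but not an argument for --- everything else.
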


We use Theorem~\ref{tgflem} and Theorem~\ref{weieroth} 
to deduce Theorem~\ref{wogffgen}.

\begin{theorem}  \label{wogffgen}   Let $R$ be either $k[[X]]\,[Y]_{(X, Y)}$ or  $k[Y]_{(Y)}[[X]]
$, where  $m$ and $n$ are  positive integers and 
$X = \{x_1, \ldots, x_n\}$ and  $Y = \{y_1, \ldots, y_m\}$ are  sets of 
independent variables over  a field $k$.
Let  $\m$ denote the maximal ideal $(X,Y)R$  of $R$. Let $(S,\n)$ be a Noetherian local integral domain containing $R$ such that$~\!\!:$
\begin{enumerate}\item[$(1)$]
The injection
$\varphi: (R,\m) \hookrightarrow (S,\n)$ is a local map. 
\item[$(2)$]  $\m S$ is $\n$-primary,  and $S/\n$ is finite algebraic over $R/\m$. 
\item[$(3)$]   $R \hookrightarrow S$ is a TGF-extension and $\dim R = \dim S$.  
\item[$(4)$]  $S$ is  universally catenary.
\end{enumerate} Then every maximal ideal of
the generic formal fiber ring $\Gff(S)$  has height
$n+m-2$. Equivalently, if $P$ is a  prime ideal
of $\widehat S$ maximal with respect to $P\cap S=(0),$ then
$\hgt(P)=n+m-2$. 
\end{theorem}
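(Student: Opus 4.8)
The plan is to reduce everything to Theorem~\ref{tgflem} applied to the inclusion $\varphi: (R,\m) \hookrightarrow (S,\n)$, by checking that the four hypotheses of Theorem~\ref{wogffgen} force all five numbered items of Theorem~\ref{tgflem} to hold. Three of these matchings are immediate: item~(2) of Theorem~\ref{wogffgen} is verbatim item~(1) of Theorem~\ref{tgflem}; item~(3) of Theorem~\ref{wogffgen} is verbatim item~(2) of Theorem~\ref{tgflem}; and item~(4) of Theorem~\ref{wogffgen} supplies the ``$S$ universally catenary'' half of item~(4) of Theorem~\ref{tgflem}. Item~(1) of Theorem~\ref{wogffgen}, together with the standing hypothesis that $S$ is a Noetherian local integral domain and that $R$ is a Noetherian local domain, guarantees that $\varphi$ is the injective local map of Noetherian local integral domains that Theorem~\ref{tgflem} requires.

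What remains is to verify the two assertions about $R$ itself---that $R$ is analytically irreducible (item~(3) of Theorem~\ref{tgflem}) and analytically normal (the other half of item~(4))---together with item~(5), that all maximal ideals of $\Gff(R)$ have the same height. First I would identify the completion $\widehat R$. In either case, $R = k[[X]]\,[Y]_{(X,Y)}$ or $R = k[Y]_{(Y)}[[X]]$, the $\m$-adic completion of $R$ is the formal power series ring $k[[X,Y]] = k[[x_1,\dots,x_n,y_1,\dots,y_m]]$ in all $n+m$ variables. Since $k[[X,Y]]$ is a regular local ring, it is a normal integral domain; hence $R$ is analytically normal, and in particular analytically irreducible. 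This settles item~(3) and the remaining half of item~(4) simultaneously.

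For item~(5), I would invoke Theorem~\ref{weieroth}, which applies to precisely these two forms of $R$ and asserts that every maximal ideal of $\Gff(R)$ has height $n+m-2$. In particular all maximal ideals of $\Gff(R)$ have the same height, so item~(5) holds, and moreover $\dim(\Gff(R)) = n+m-2$. With items~(1)--(5) of Theorem~\ref{tgflem} now in force, the final clause of that theorem yields that the height $h$ of every maximal ideal of $\Gff(S)$ equals $\dim(\Gff(R)) = n+m-2$. Translating through the correspondence between maximal ideals of $\Gff(S)$ and primes $P$ of $\widehat S$ maximal with respect to $P \cap S = (0)$ then gives $\hgt(P) = n+m-2$, as claimed.

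As for difficulty, there is no serious obstacle here: the statement is engineered to be a corollary of Theorem~\ref{tgflem} and Theorem~\ref{weieroth}. The one point that deserves genuine care is the computation of $\widehat R$; one must check that both presentations of $R$---the polynomial-then-power-series ring and the power-series-then-polynomial ring---complete to the same regular local ring $k[[X,Y]]$, since it is exactly the regularity of this completion that delivers both the analytic irreducibility and the analytic normality of $R$ in a single stroke.
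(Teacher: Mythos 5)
Your proposal is correct and follows exactly the paper's own argument: verify conditions (1)--(5) of Theorem~\ref{tgflem} for $\varphi: R \hookrightarrow S$, obtaining the analytic normality and irreducibility of $R$ from the fact that $\widehat R = k[[X,Y]]$ is regular, and obtaining condition (5) together with $\dim(\Gff(R)) = n+m-2$ from Theorem~\ref{weieroth}. Nothing in your write-up diverges from the published proof beyond spelling out the bookkeeping in more detail.
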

\begin{proof} We check that the conditions 1--5 of Theorem~\ref{tgflem} are satisfied 
for $R$ and  $S$ and the injection $\varphi$. 
Since the completion of $R$ is $k[[X,Y]]$, $R$ is analytically normal, and so also analytically irreducible. Items 1--4 of Theorem~\ref{wogffgen}  
ensure that the rest of conditions 1--4 of Theorem~\ref{tgflem} hold. 
By Theorem~\ref{weieroth}, every maximal ideal of $\Gff(R)$ has height~$n+m-2$, and so condition 5 of Theorem~\ref{tgflem} holds. Thus by Theorem~\ref{tgflem},
every maximal ideal of $\Gff(S)$  has height
$n+m-2$. 
\end{proof}

\begin{remark}   \label{examples for mixed}    Let $k,X,Y,$ and $R$ be as in 
Theorem~\ref{wogffgen}.  Let $A$ be a finite integral extension domain of $R$ 
and let $S$ be the localization of $A$ at a maximal ideal.   
As observed in the proof of Theorem~\ref{wogffgen},  
$R$ is a local analytically normal integral domain. Since $S$ is a localization 
of a finitely generated $R$-algebra and $R$ is universally catenary, it follows 
that $S$ is universally catenary. 
We also have that conditions 1--3 of Theorem~\ref{wogffgen} hold. 
Thus  the extension $R \hookrightarrow S$ satisfies the hypotheses of Theorem~\ref{wogffgen}.
Hence   every maximal ideal of $\Gff(S)$  has height
$n+m-2$. 
\end{remark}  

Example~\ref{wogffgene} is an application of Theorem~\ref{wogffgen} and Remark~\ref{examples for mixed}. 

\begin{example} ~\label{wogffgene} Let $k,X,Y,$ and $R$ be as in Theorem~\ref{wogffgen}. 
Let $K$ denote the field of fractions of $R$,  and let $L$ be a finite 
algebraic extension field of $K$. Let $A$ be the 
integral closure of $R$ in $L$, and let $S$ be a localization of $A$  at a 
maximal ideal.   The ring $R$ is a Nagata ring by \cite[Prop.3.5]{Marot}. 
Therefore $A$ is a finite integral extension of $R$ and 
the conditions of Remark~\ref{examples for mixed} apply to show
that every maximal ideal of $\Gff(S)$  has height
$n+m-2$. 
\end{example}

\begin{remark} \label{essfgnfg2}   With  notation as in Example~\ref{wogffgene},   since the sets $X$ and $Y$ are nonempty, 
the field $K$ is a simple transcendental
extension of a subfield. It follows that 
the regular local ring $R$
is not Henselian,  see \cite[Satz~2.3.11, p. 60]{BKKN} and \cite{S}.  
 Hence there exists a finite algebraic field extension $L/K$  such that the integral closure $A$ of $R$ in $L$
has more than one maximal ideal.   It follows that  the localization $S$ of $A$ at any one  of these maximal ideals 
is not a finite $R$-module, and gives an
example $R \hookrightarrow S$ that  satisfies the hypotheses of  Theorem~\ref{tgflem}.  
\end{remark}

\end{document}